\newtheorem{lemmaa}{Assertion}
\newcommand{\F}{\mathbb{F}}
\newcommand{\Z}{\mathbb{Z}}
\def\bbbz{{\mathchoice {\hbox{$\sf\textstyle Z\kern-0.4em Z$}}
 {\hbox{$\sf\textstyle Z\kern-0.4em Z$}} {\hbox{$\sf\scriptstyle
 Z\kern-0.3em Z$}} {\hbox{$\sf\scriptscriptstyle Z\kern-0.2em
 Z$}}}}
\newcommand{\card}[1]{\left|#1\right|}
\newcommand{\tr}{\mbox{\rm Tr}}
\newenvironment{tabcen}[1]{\begin{center}\begin{tabular}{#1}}{\end{tabular}\end{center}}
\newcommand{\proto}[3]{\begin{tabcen}{p{.4\textwidth}cp{.4\textwidth}}
\hspace{1.8cm}{#1} & & \hspace{1.8cm}{#2} \\ \hline
#3
\hline\end{tabcen}}
\newcommand{\grn}[1]{\mbox{\rm GR}\left(#1\right)}
\title{Proof of Correspondence between Keys and Encoding Maps in an Authentication Code}
\titlerunning{Proof of Correspondence}  %
\author{Juan Carlos Ku-Cauich,  Guillermo Morales-Luna\inst{1} \and Horacio Tapia-Recillas\inst{2}
}
\authorrunning{Ku-Cauich et al.}   
\institute{Computer Science, CINVESTAV-IPN, Mexico City, Mexico, 
\email{jckc35@hotmail.com,gmorales@cs.cinvestav.mx}
\and
Mathematics Department, Universidad Aut\'onoma Metropolitana-I, Mexico City, Mexico, 
\email{htr@xanum.uam.mx}
}
\begin{document}
\maketitle

\begin{abstract}
In a former paper the authors  introduced two new systematic authentication codes based on the Gray map over a Galois ring. In this paper, it is proved the one-to-one onto correspondence between keys and encoding maps for the second introduced authentication code.

 \keywords{Authentication schemes, resilient maps, Gray map.}

{\bf 2010 MSC} Primary: 11T71; Secondary: 14G50, 94A60, 94A62.
\end{abstract}

\section{Introduction}

Systematic authentication codes without secrecy were defined in~\cite{DingN04}.
In~\cite{ku15an} two new systematic authentication codes based on the Gray map on a Galois ring are introduced with the purpose of optimally reducing the impersonation and substitution probabilities. The first code is another example of a previously constructed code using the Gray map on Galois rings and modules over these rings~\cite{Ku-CauichT13,OzbudakS06}. 
The second code generalises the construction in~\cite{Ku-CauichT13}, on the assumption of the existence of an appropriate class of bent functions.
For the first code, the existence of the bijection between the key space and the set of encoding maps is proved in this paper in a rather long but exhaustive way.

\section{Refreshment of basic notions}

\subsection{General  systematic authentication codes} \label{sc.sac01}

We recall that a {\em systematic authentication code without secrecy}~\cite{DingN04} is a structure $(S,T,K,E)$ where $S$ is the {\em source state space}, $T$ is the {\em tag space}, $K$ is the {\em key space} and $E=\left(e_k\right)_{k\in K}$ is a sequence of {\em encoding rules} $S\to T$.

A {\em transmitter} and a {\em receiver} agree to a secret key $k\in K$. Whenever a source $s\in S$ must be sent, the participants proceed according to the following protocol:

\proto{Transmitter}{Receiver}{
evaluates $t = e_k(s)\in T$ &  &  \\
forms the pair $m=(s,t)$ & $\stackrel{m}{\longrightarrow}$ & receives $m'=(s',t')$,  \\
 &  & evaluates $t'' = e_k(s')\in T$ \\
 &  & if $t'=t''$ then accepts $s'$, otherwise the message $m'$ is rejected \\
}

The communicating channel is public, thus it can be eavesdropped upon by an {\em intruder} able to perform either {\em impersonation} or {\em substitution} attacks through the public channel. The intruder's success probabilities for impersonation and substitution are, respectively~\cite{Stinson92}
\begin{eqnarray}
p_I &=& \max_{(s,t)\in S\times T} \frac{{\card{\{k\in K|\ e_k(s)=t\}}}}{\card{K}}  \label{eq.d01} \\
p_S &=& \max_{(s,t)\in S\times T}\max_{(s',t')\in (S-\{s\})\times T} \frac{\card{\{k\in K|\ e_k(s)=t\ \&\ e_k(s')=t'\}}}{\card{\{k\in K|\ e_k(s)=t\}}}  \label{eq.d02}
\end{eqnarray}

\subsection{The first systematic authentication code} \label{esquem2}

The first systematic authentication code introduced in~\cite{ku15an} is constructed as follows:

Let $p$ be a prime number, $r,\ell,n\in\Z^+$ and $q = p^{\ell}$. Let $A=\grn{p^r,\ell}$ and $B=\grn{p^r,\ell n}$ be the corresponding Galois rings of degrees $\ell$ and $\ell n$. 
We denote by $T(A) =\{0\}\cup\left(\xi_A^j\right)_{j=0}^{q -2}$ the set of Teichm\"uller representatives of $\F_q$ in $A$. Then $p^{r-1}A = \{a\,p^{r-1}|\ a\in T(A)\}$. We define $\Xi=(0,\rho(\xi_A),\ldots,\rho(\xi_A^{q-2}),\rho(\xi_A^{q-1}))\in \F_q^{q}$
and
$L=\{r_{0}+r_{1}p+\cdots + r_{r-2}p^{r-2}~|~ r_{0},\ldots,
r_{r-2}\in{ T}(A) \}\subset A\backslash p^{r-1}A\cup \{0\}.$ 
Since $\left\langle p^{r-1}\right\rangle=\{ap^{r-1}~|~ a\in {T}(A)\}$, if $a,b \in L$ then $a-b \in A\backslash p^{r-1}A$. 

Similarly, $T(B)$ is the set of the Teichm\"uller representatives of $\ F_{q^{m}}$ in $B$.

Let $n\in\Z^+$ and $t\leq n$. For any $i<n$, we denote $e_i=\left(\delta_{ij}\right)_{j=0}^{n-1}$ as the $i$-th ``canonical'' vector.
For any $b\in T(B)^n$, let
\begin{eqnarray}
 X_{b,t} &=& \{\sum_{j=0}^{t-2}b_je_j,b_{t-1}e_{t-1},\ldots,b_{n-1}e_{n-1}\}\subset B^n, \nonumber \\
 N &=& \bigcup_{b\in T(B)^n}X_{b,t}, \label{eq.sacn} \\
 L &=& \left\{\sum_{i=0}^{r-2} r_ip^i|\ (r_0,\ldots,r_{r-2})\in T(A)^{r-1}\right\}. \label{eq.sacl}
\end{eqnarray}
Then 
$\card{X_{b,t}} = n-t+1$, $\card{N} = q^{m(t-1)}+(n-(t-1))q^m$, $\card{L} = q^{r-1}$, \linebreak $L \subset (A-p^{r-1}A)\cup\{0\}$ and also $\forall u,v\in L:\ (u-v)\in (A-p^{r-1}A)\cup\{0\}.$ 
Let us consider an $(r-1) n$-subset of $T(A)-\{0,1\}$, 
\begin{equation}
\eta = \left\{\eta_k\right\}_{k=0}^{(r-1)n-1}, \label{eq.eta}
\end{equation}
and 
\begin{equation}
D_{\eta} = \left\{(\eta_{(i-1)n+j},p^ie_j)|\ 1\leq i\leq r-1\,,\,0\leq j\leq n-1\right\}. \label{eq.deta}
\end{equation}
Then $D_{\eta}\subset A\times B^n$ and $\card{D_{\eta}} = (r-1)n$.

Let us write $T(B)=\{0\}\cup\left(\xi_B^k\right)_{k=0}^{q^m -2}$,  
$G(T(B)) = \{\xi_B^k|\ \mbox{gcd}(k,q^m -1)=1\}$ and
 $\theta = \left\{\theta_j\right\}_{j=0}^{n-1}$, which is an $n$-sequence of $G(T(B))$ (repetitions are allowed), and $\zeta\in T(B)-\{0\}$.
For each integer $k$, with $0\leq k\leq q^m -(r-1)n-2$, let
$$T_{\theta\zeta k} =  \left\{(\theta_j^i,(\zeta+\theta_j^i\, p^{1+(k\bmod(r-1))})e_j)|\ 0\leq i\leq q^m -2\,,\,0\leq j\leq n-1\right\}.$$
Then $T_{\theta\zeta k}\subset B\times B^n$ and $\card{T_{\theta\zeta k}} = (q^m -1)n$.
Now, let $Z = \left\{\zeta_k\right\}_{k=0}^{q^m -(r-1)n-2}$ be a subset of $T(B)-\{0\}$, with $(q^m -1-(r-1)n-1)$ elements, such that $Z \cap\eta = \emptyset$, and 
\begin{equation}
{\bf T}_{\eta\theta Z} =  D_{\eta}\cup\bigcup_{k=0}^{q^m -(r-1)n-2} T_{\theta\zeta_k k}. \label{eq.teta}
\end{equation}
Then ${\bf T}_{\eta\theta Z}\subset B\times B^n$ and
\begin{eqnarray*}
\card{{\bf T}_{\eta\theta Z}} &=& (r-1)n + (q^m -1-(r-1)n)(q^m -1)n \\
 &=& \left[(r-1) + \left[(q^m  -1)- (r-1)n\right](q^m -1)\right] n
\end{eqnarray*}

Given a $t$-resilient map~\cite{ku15an} $f:B^n\to B$, for each $s=(s_0,s_1,s_2)\in S $ and each $w\in p^{r-1}A$, consider the map
\begin{eqnarray}
v_{s,w}:B^n &\to& A \nonumber \\
x &\mapsto&  \begin{array}[t]{rcl}
v_{s,w}(x) &=& \tr_{B/A}(s_0\,f(x) + s_1\cdot x) + s_2 + w \\
 &=& \gamma_{s_0s_1f}(x) + s_2 + w
\end{array}  \label{eq.rs051}
\end{eqnarray}
Let
\begin{eqnarray}
u_{s,w} &=& \left(\Phi\left(v_{s,w}(x)\right)\right)_{x\in B^n} \in \left(\F_q^{q^{r-1}}\right)^{q^{rmn}} ,   \nonumber \\
u_s &=& \left(u_{s,w}\right)_{w\in p^{r-1}A} \in \left(\F_q^{q^{r-1}}\right)^{q^{rmn+1}} .   \label{eq.rs052} 
\end{eqnarray}
Since $\card{p^{r-1}A} = q$, we have $\left(\F_q^{q^{r-1}}\right)^{q^{rmn+1}} \simeq \F_q^{q^{r(mn+1)}}$, thus we may assume $u_s\in\F_q^{q^{r(mn+1)}}$.

This paper is devoted to prove the following theorem which is the main contribution in~\cite{ku15an}:

\begin{theorem}\label{pr.04} The map $K\to E$, $k\mapsto e_k$, is one-to-one.
\end{theorem}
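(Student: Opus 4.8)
The plan is to recover the key $k$ from the entire tag table of $e_k$, thereby showing the assignment is injective. I write a key as $k=(x,w)$ with $x\in B^n$ and $w\in p^{r-1}A$, so that by \eqref{eq.rs051}--\eqref{eq.rs052} the encoding rule is $e_k(s)=\Phi\!\left(v_{s,w}(x)\right)$ for the sources $s=(s_0,s_1,s_2)\in S$. Suppose $e_k=e_{k'}$ with $k=(x,w)$, $k'=(x',w')$. First I would remove the Gray map: $\Phi$ is a bijection onto $\F_q^{q^{r-1}}$, hence injective, so $e_k=e_{k'}$ gives $v_{s,w}(x)=v_{s,w'}(x')$ for every $s\in S$. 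Putting $\Delta=x-x'$, $\delta=f(x)-f(x')$ and $\omega=w'-w\in p^{r-1}A$, and cancelling the additive source term $s_2$, this becomes the single family
\[
\tr_{B/A}\!\left(s_0\,\delta+s_1\cdot\Delta\right)=\omega,\qquad (s_0,s_1)\ \text{running over the source pairs of }{\bf T}_{\eta\theta Z}.
\]
The objective is to force $\Delta=0$ and $\omega=0$, that is $k=k'$.

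The second step would exploit the explicit form of the blocks $T_{\theta\zeta_k k}$. Since each $\theta_j\in G(T(B))$ generates the cyclic group $T(B)-\{0\}$, letting the exponent $i$ vary makes $s_0=\theta_j^{\,i}$ sweep all of $T(B)-\{0\}$, while $s_1=(\zeta_k+\theta_j^{\,i}p^{c})e_j$ with $c=1+(k\bmod(r-1))$ reads only the coordinate $\Delta_j$ of $\Delta$. Collecting the $\theta_j^{\,i}$-terms, the block becomes
\[
\tr_{B/A}\!\left(\tau\,(\delta+p^{c}\Delta_j)\right)=\omega-\tr_{B/A}(\zeta_k\Delta_j)\qquad\text{for all }\tau\in T(B)-\{0\}.
\]
As the right-hand side does not depend on $\tau$, I would average over the orbit: summing on $\tau$ and using $\sum_{\tau\in T(B)}\tau=0$ with the invertibility of $q^m-1$ in $A$ makes that constant vanish, leaving $\tr_{B/A}(\tau(\delta+p^{c}\Delta_j))=0$ for every $\tau\in T(B)$. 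Because $T(B)$ contains an $A$-basis of $B$ and the trace form is non-degenerate, this gives the two relations $\delta+p^{c}\Delta_j=0$ and $\omega=\tr_{B/A}(\zeta_k\Delta_j)$, valid for every coordinate $j$ and every index $k$.

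The third step climbs the $p$-adic filtration. Comparing the relation $\delta=-p^{c}\Delta_j$ for two different exponents $c$ (available because $k\bmod(r-1)$ takes several values) yields $p\,\Delta_j=0$, hence $\Delta_j\in p^{r-1}B$, and then $\delta=0$. It remains to upgrade $\Delta_j\in p^{r-1}B$ to $\Delta_j=0$, and here I would use that $\omega=\tr_{B/A}(\zeta_k\Delta_j)$ holds for \emph{every} $\zeta_k\in Z$: subtracting two indices gives $\tr_{B/A}\!\left((\zeta_k-\zeta_{k'})\Delta_j\right)=0$, and writing $\Delta_j=p^{r-1}\mu_j$ and reducing modulo $p$ turns this into $\tr_{\F_{q^m}/\F_q}\!\left((\bar\zeta_k-\bar\zeta_{k'})\bar\mu_j\right)=0$. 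Provided the residues of the differences of $Z$ span $\F_{q^m}$ over $\F_q$, non-degeneracy of the field trace forces $\bar\mu_j=0$, i.e. $\mu_j\in pB$, whence $\Delta_j=p^{r-1}\mu_j=0$; this holds for all $j$, so $x=x'$, and then $\omega=\tr_{B/A}(\zeta_k\cdot 0)=0$ gives $w=w'$. Thus $k=k'$ and $k\mapsto e_k$ is one-to-one.

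I expect the main obstacle to be precisely this last upgrade, together with the separation of the resilient-function term from the linear term performed in the second step. The source pairs only ever probe a single coordinate of $x$ through $s_1\cdot\Delta$, while the term $s_0\,\delta$ couples all coordinates through $f$, so one cannot simply invoke non-degeneracy of the bilinear form on $B^n$; instead one must disentangle the two contributions block by block and then check that the chosen sets $\eta$, $\theta$ and $Z$ are rich enough (the spanning condition above, and the analogous constraints coming from $D_{\eta}$ with its powers $p^{i}e_j$, $1\le i\le r-1$) to determine $x$ exactly rather than merely modulo $p^{r-1}B$. Carrying out this verification across all the source families is what makes the argument long and exhaustive.
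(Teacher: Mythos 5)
There is a genuine gap, and it sits in your very first line: you have mis-modelled the key space. In this code a key is \emph{not} a pair $(x,w)\in B^n\times p^{r-1}A$, and the tag is \emph{not} the whole vector $\Phi\left(v_{s,w}(x)\right)$. The tag space is $\F_q$: by~(\ref{eq.rs052}) the element $u_s$ lives in $\F_q^{q^{r(mn+1)}}$ and $e_k(s)=\pi_k(u_s)$ is a \emph{single coordinate} of it, so a key is really a triple consisting of $x$, $w$, \emph{and} a position inside the length-$q^{r-1}$ block $K_{x,w}$ occupied by the Gray image $\Phi\left(v_{s,w}(x)\right)$. This has two consequences. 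First, your argument says nothing about the $q^{r-1}$ distinct keys sharing the same $(x,w)$ --- the paper's Case~I (Assertion~\ref{lm.I}), which requires exhibiting sources $s$ (via suitable $s_2\in L$) whose Gray images separate two prescribed coordinate positions. Second, your opening move, ``$\Phi$ is injective, so $e_k=e_{k'}$ gives $v_{s,w}(x)=v_{s,w'}(x')$ for all $s$,'' is not available: the hypothesis $e_{k_0}=e_{k_1}$ only equates the $j_0$-th coordinate of $\Phi\left(v_{s,w}(x)\right)$ with the $j_1$-th coordinate of $\Phi\left(v_{s,w'}(x')\right)$ for possibly different positions $j_0\neq j_1$, and injectivity of $\Phi$ as a map does not let you pass from equality of single, mismatched coordinates to equality of the arguments. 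The entire difficulty the paper is addressing --- the case split I--IV and the repeated use of the additive splitting of $\Phi$ on $p^{r-1}A$-translates with carefully chosen $s_2$ --- is precisely about controlling individual Gray-map coordinates, and that difficulty is invisible in your model.

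Even granting your model, the sketch is not closed: extracting $p\,\Delta_j=0$ by comparing two exponents $c=1+(k\bmod(r-1))$ requires $r\ge 3$, and the final upgrade from $\Delta_j\in p^{r-1}B$ to $\Delta_j=0$ rests on an unverified spanning hypothesis on the residues of differences of elements of $Z$, which the construction of $Z$ in~(\ref{eq.teta}) does not guarantee. But these are secondary; the proposal must first be recast for the actual key space and tag space before it can be measured against the theorem.
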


\begin{proof}
The theorem is clearly equivalent to the following statement:
\begin{equation}
\forall k_0,k_1\in K:\ \left[k_0\not=k_1 \ \Longrightarrow\ \exists s\in S:\ \pi_{k_0}(u_s)\not= \pi_{k_1}(u_s)\right] \label{eq.pp01}
\end{equation}
where $u_s$ is given by relation~(\ref{eq.rs052}), and
$\pi_k(u_s)$ is the $k$-th entry of the element $u_s.$

According to~(\ref{eq.rs052}), each element $u_s$, $s\in S$, is the concatenation of $q$ arrays $u_{s,w}$, each of length $q^{rmn}$. The index range $\{0,\ldots,q^{r(mn+1)}-1\}$ of the element $u_s$ can be split as the concatenation of $q^{rmn+1}$ integer intervals
$$K_{x,w} = \{\mbox{indexes of entries with the value }\Phi\left(v_{s,w}(x)\right)\}$$
with $(x,w)\in B^n\times p^{r-1}A$, and each integer interval $K_{x,w}$ has length $q^{r-1}$. 

We recall at this point that
$\card{B^n\times p^{r-1}A} = q^{rmn} q = q^{rmn+1}.$ 
Let $\alpha_b:B^n\to\{0,\ldots,q^{rmn}-1\}$, $\alpha_a:p^{r-1}A\to\{0,\ldots,q-1\}$ be the corresponding natural bijections. Then we may identify
$$K_{x,w}\approx\{k\in K|\ k_{x,w}q^{r-1}\leq k\leq k_{x,w}q^{r-1}+(q^{r-1}-1)\},$$
where
\begin{equation}
\forall (x,w)\in B^n\times p^{r-1}A:\ \ \ \ k_{x,w} = \alpha_b(x) q + \alpha_a(w). \label{eq.pq01}
\end{equation}

Let $k_0,k_1\in K\approx\{0,\ldots,q^{r(mn+1)}-1\}$ be two keys such that $k_0\not=k_1$. Depending on the intervals $K_{x,w}$ in which these keys fall, we may consider four mutually disjoint and exhaustive cases.
\begin{itemize}
\item {\em Case I:} \ \ $\exists w\in p^{r-1}A,\exists x\in B^n$: $k_0\in K_{x,w}$ \& $k_1\in K_{x,w}$.
\item {\em Case II:} \ $\exists w\in p^{r-1}A,\exists x,y\in B^n$: $x\not= y$ \& $k_0\in K_{x,w}$ \& $k_1\in K_{y,w}$.
\item {\em Case III:} $\exists w_0,w_1\in p^{r-1}A,\exists x\in B^n$: $w_0\not= w_1$ \& $k_0\in K_{x,w_0}$ \& $k_1\in K_{x,w_1}$.
\item {\em Case IV:} $\exists w_0,w_1\in p^{r-1}A,\exists x,y\in B^n$:
$$w_0\not= w_1\ \&\ x\not= y\ \&\ k_0\in K_{x,w_0}\ \&\ k_1\in K_{y,w_1}.$$
\end{itemize}
The analysis of these cases, giving a full proof of the theorem, is rather extensive and it is provided in the following section. \end{proof}

\section{Proof of Proposition 3 in~\cite{ku15an}}

The detailed proof of Theorem~\ref{pr.04} is presented in this section. 
The plan of the proof is sketched as Plan~\ref{tb.pl}. In what follows, we will list extensively all the assertions claimed in the proof plans.

\renewcommand{\tablename}{Plan}
\begin{table}
\fbox{ \IncMargin{1em}
\begin{algorithm}[H]
 \eIf{Case I holds}
  {{\bf I.} See Assertion~\ref{lm.I}}
 {\eIf{Case II holds}
  {let $k_{00} = k_0-k_{x,w}$ and $k_{10} = k_1-k_{y,w}$ \;
   \eIf{$k_{00} = k_{10}$}
  {proceed as in Plan~\ref{tb.pl0}} 
  {proceed as in Plan~\ref{tb.pl1}}
  }
 {\eIf{Case III holds}
  {let $k_{00} = k_0 - k_{x,w_0}$ and $k_{10} = k_1 - k_{x,w_1}$, according to~(\ref{eq.pq01}) \;
  \eIf{$k_{00} = k_{10}$}
 {{\bf III.0} See Assertion~\ref{lm.III0}}
 {pick  $(s_0,s_1)\in\{0\}\times\left(N-\{0\}\right)$ arbitrarily \;
 \eIf{$\pi_{k_{00}}\circ \Phi\left(\tr_{B/A}(s_0\,f(x) + s_1\cdot x) + w_0\right)=\pi_{k_{10}}\circ \Phi\left(\tr_{B/A}(s_0\,f(x) + s_1\cdot x) + w_1\right)$}
 {{\bf III.1.0} See Assertion~\ref{lm.III10}}
 {{\bf III.1.1} See Assertion~\ref{lm.III11}}}  }
  {(at this point, {\em Case IV} necessarily does hold ) \\
  let $k_{00} = k_0 - k_{x,w_0}$ and $k_{10} = k_1 - k_{x,w_1}$, according to~(\ref{eq.pq01}) \;
 \eIf{$\pi_{k_{00}}\circ \Phi\left(\tr_{B/A}(f(x) )\right) = \pi_{k_{10}}\circ \Phi\left(\tr_{B/A}(f(y) )\right)$}
 {{\bf IV.0} See Assertion~\ref{lm.IV0}}
 {{\bf IV.1} See Assertion~\ref{lm.IV1}}
  }}}
\end{algorithm}}
\caption{Plan of the proof of Theorem~\ref{pr.04}. \label{tb.pl} }
\end{table}

\begin{table}
\fbox{ \IncMargin{1em}
\begin{algorithm}[H]
 {choose $j\in\{0,\ldots,n-1\}$ such that the $j$-th entry of $x-y$ is not zero, namely $x_j-y_j\not=0$ \;
  \eIf{$x_j-y_j\in p^{r-1}B-\{0\}$}
 {{\bf II.0.0} See Assertion~\ref{lm.II00}}
 {there are $\theta\in T_B-T_A$, and $t\leq r-1$ such that $\tr_{B/A}(\theta p^t(x_j-y_j))\in p^{r-1}A-\{0\}$ \;
\eIf{$\tr_{B/A}(x_j) = \tr_{B/A}(y_j)$}
 {let $\zeta\in T_A-\{0\}$ be such that $(\theta,(\zeta+\theta p^t)e_j)\in\bigcup_{k=0}^{q^m -(r-1)n-2} T_{\theta\zeta_k k}$ as defined at~(\ref{eq.teta}) \;
we have 
\begin{eqnarray*}
 \tr_{B/A}(\zeta  x_j) &=& \sum_{k=0}^{r-1} d_kp^k = \tr_{B/A}(\zeta  y_j) \\
 \tr_{B/A}(\theta p^t x_j) &=& \sum_{k=0}^{r-2} a_kp^k + a_{r-1}p^{r-1} \\  
 \tr_{B/A}(\theta p^t y_j) &=& \sum_{k=0}^{r-2} a_kp^k + b_{r-1}p^{r-1}
\end{eqnarray*}
with $a_{r-1}\not=b_{r-1}$ \;
let $(s_0,s_1) = (\theta, (\zeta+\theta p^t)e_j)$ \;
\eIf{$\pi_{k_{00}}\circ \Phi\left(\tr_{B/A}(\theta f(x))\right)=\pi_{k_{10}}\circ \Phi\left(\tr_{B/A}(\theta f(y))\right)$}
 {{\bf II.0.1.0.0} See Assertion~\ref{lm.II0100}}
 {{\bf II.0.1.0.1} See Assertion~\ref{lm.II0101}}}
 {{\bf II.0.1.1 } \eIf{$\pi_{k_{00}}\circ \Phi\left(\tr_{B/A}(f(x))\right) = \pi_{k_{10}}\circ \Phi\left(\tr_{B/A}(f(y))\right)$}
 {There is a $t$, $0\leq t\leq r-1$, such that $\tr_{B/A}(p^t(x_j-y_j))\in p^{r-1}A-\{0\}$ (here the hypothesis $\tr_{B/A}(x_j) \not= \tr_{B/A}(y_j)$ is very important) \;
 \eIf{$t=0$}
 {{\bf II.0.1.1.0.0} See Assertion~\ref{lm.II01100}}
 {{\bf II.0.1.1.0.1} See Assertion~\ref{lm.II01101}}}
 {{\bf II.0.1.1.1} See Assertion~\ref{lm.II0111}}}  }  }
\end{algorithm}}
\caption{First branch of Case II. \label{tb.pl0} }
\end{table}

\begin{table}
\fbox{ \IncMargin{1em}
\begin{algorithm}[H]
let $j\in\{0,\ldots,n-1\}$ be such that $x_j-y_j\not=0$ \;
\eIf{$x_j-y_j\in p^{r-1}B-\{0\}$}
 {{\bf II.1.0} See Assertion~\ref{lm.II10}}
 {there exist $\theta\in (T_B-T_A)\cup\{1\}$ and $t\in\{1,\cdots,r-1\}$ such that $\tr_{B/A}(\theta p^t(x_j-y_j))\in p^{r-1}A-\{0\}$ \;
\eIf{$\tr_{B/A}(x_j)=\tr_{B/A}(y_j)$}
 {let $\zeta\in T_A-\{0\}$ be such that the pair $(s_0,s_1) = (\theta,(\zeta+\theta p^t)e_j)$  is included in the set $\bigcup_{k=0}^{q^m -(r-1)n-2} T_{\theta\zeta_k k}$ as defined at~(\ref{eq.teta}) \;
\eIf{$\Phi\left(\tr_{B/A}(\theta f(x))\right) = \Phi\left(\tr_{B/A}(\theta f(y))\right)$}
 {{\bf II.1.1.0.0} See Assertion~\ref{lm.II1100}}
 {{\bf II.1.1.0.1} See Assertion~\ref{lm.II1101}} }
 {proceed as in statement {\bf II.0.1.1} of Plan~\ref{tb.pl0}} }
\end{algorithm}}
\caption{Second branch of Case II. \label{tb.pl1} }
\end{table}
\renewcommand{\tablename}{Table}

\begin{lemmaa} \label{lm.I}
 Upon the condition underlying the statement {\bf I} in Plan~\ref{tb.pl}, the claim~(\ref{eq.pp01}) holds.
\end{lemmaa}

\begin{proof}
Let $(s_0,s_1)\in\{0\}\times\left(N-\{0\}\right)$ and
$$\tr_{B/A}(s_0 f(x) + s_1\cdot x) = \sum_{i=0}^{r-2} a_ip^i + a_{r-1}p^{r-1}.$$
For each $k\in\{0,\ldots,r-2\}$, there exists $y^{(k)}= \sum_{i=0}^{r-2} y_{ik}p^i\in L$ such that
$$\tr_{B/A}(s_0 f(x) + s_1\cdot x)+y^{(k)} = \left\{\begin{array}{ll}
 a_kp^k + a_{r-1}p^{r-1} & \mbox{if } a_k\not =0 \\
 y_{kk}p^k + a_{r-1}p^{r-1} & \mbox{if } a_k =0\ \&\ y_{kk}\not =0
\end{array}\right.$$
Thus,
\begin{eqnarray*}
 & & \ \ \ \Phi\left(\tr_{B/A}(s_0 f(x) + s_1\cdot x) + y^{(k)} + w\right) \\ \rule{0ex}{.7cm}
 &=& \left\{\begin{array}{rl}
 \Phi\left(a_kp^k \right) + \Phi\left(a_{r-1}p^{r-1} + w\right) & \mbox{if } a_k\not =0 \\
 \Phi\left(y_{kk}p^k  \right)+ \Phi\left(a_{r-1}p^{r-1} + w\right) & \mbox{if } a_k =0\ \&\ y_{kk}\not =0
\end{array}\right.
\end{eqnarray*}
We have that $(s_0,s_1,y^{(k)})\in S$ and $w\in p^{r-1}A$.

Now, let $k_{00} = k_0-k_{x,w}$ and $k_{10} = k_1-k_{x,w}$. Let us consider the following possibilities:
\begin{itemize}
\item $q\not|(k_{10}-k_{00})$: By taking $a_{r-2} \not=0$, all other coefficients zero, and $s=(s_0,s_1,s_2)$, the $k_{00}$-projection of $u_{s,w}$ (see~(\ref{eq.rs052})) differs from its $k_{10}$-projection, thus $\pi_{k_0}(u_s)\not= \pi_{k_1}(u_s)$.
\item $q|(k_{10}-k_{00})$ and ($\exists d$: $1\leq d\leq r-1$ \& $q^{d-1}\leq k_{10}-k_{00}<q^d$): By taking $a_{r-2-d} \not=0$ and all other coefficients zero, and $s=(s_0,s_1,s_2)$, the $k_{00}$-projection of $u_{s,w}$ differs from its $k_{10}$-projection, thus $\pi_{k_0}(u_s)\not= \pi_{k_1}(u_s)$.
\end{itemize}
\end{proof}

\begin{lemmaa} \label{lm.II00}
  Upon the condition underlying the statement {\bf II.0.0} in Plan~\ref{tb.pl0}, the claim~(\ref{eq.pp01}) holds.
\end{lemmaa}

\begin{proof}
There exists $\theta\in T_B$ such that $\tr_{B/A}(\theta (x_j-y_j))\in p^{r-1}B-\{0\}$. We express in their $p$-adic forms $\tr_{B/A}(\theta  x_j)$ and $\tr_{B/A}(\theta  y_j)$, namely
\begin{equation}
\tr_{B/A}(\theta  x_j) = \sum_{k=0}^{r-1} a_kp^k\ \ , \ \ \tr_{B/A}(\theta  y_j) = \sum_{k=0}^{r-1} b_kp^k.  \label{eq.pftxy}
\end{equation}
Thus
$$\sum_{k=0}^{r-1} (a_k-b_k)p^k = (a_0-b_0) + \sum_{k=1}^{r-1} (a_k-b_k)p^k\in p^{r-1}A-\{0\}$$ and  $a_0-b_0=0$.
Also
$$\sum_{k=1}^{r-1} (a_k-b_k)p^{k-1} = (a_1-b_1) + \sum_{k=2}^{r-1} (a_k-b_k)p^k\in p^{r-2}A-\{0\}$$ and  $a_1-b_1=0$.  Successively, continuing with this procedure, $\forall k\leq r-2$, $a_k=b_k$, and $(a_{r-1}-b_{r-1})p\in pA-\{0\}$. Hence $a_{r-1}\not=b_{r-1},$ and
$\Phi\left(\tr_{B/A}(\theta  x_j)\right)\not=\Phi\left(\tr_{B/A}(\theta  y_j)\right).$

Let $s_0=0$, $s_1=\theta  e_j$, $s_2=0$ and $s=(s_0,s_1,s_2)\in S$. Then, according to~(\ref{eq.rs051}),
\begin{eqnarray*}
\Phi\left(v_{s,w}(x)\right) &=& \Phi\left(\tr_{B/A}(s_0\,f(x) + s_1\cdot x) + s_2 + w\right) \\
 &=& \Phi\left(\tr_{B/A}(\theta  x_j)\right) + \Phi\left(w\right) \\
 &\not=& \Phi\left(\tr_{B/A}(\theta  y_j)\right) + \Phi\left(w\right) \\
 &=& \Phi\left(\tr_{B/A}(s_0\,f(y) + s_1\cdot y) + s_2 + w\right) \\
 &=& \Phi\left(v_{s,w}(y)\right),
\end{eqnarray*}
and, in particular,
$\pi_{k_{00}}\circ \Phi\left(v_{s,w}(x)\right) \not= \pi_{k_{10}}\circ \Phi\left(v_{s,w}(x)\right).$
Thus,  implication~(\ref{eq.pp01}) holds under these conditions.
\end{proof}

\begin{lemmaa} \label{lm.II0100}
  Upon the condition underlying the statement {\bf II.0.1.0.0} in Plan~\ref{tb.pl0}, implication~(\ref{eq.pp01}) holds.
\end{lemmaa}

\begin{proof}
Let
$s_2 = d_{r-1}p^{r-1} + a_{r-1}p^{r-1} - \tr_{B/A}((\zeta+\theta p^t) x_j) =$\linebreak $d_{r-1}p^{r-1} + b_{r-1}p^{r-1} - \tr_{B/A}((\zeta+\theta p^t) y_j).$
Then,
\begin{eqnarray*}
\Phi\left(v_{s,w}(x)\right) &=& \Phi\left(\tr_{B/A}(s_0\,f(x) + s_1\cdot x) + s_2 + w\right) \\
 &=& \Phi\left(\tr_{B/A}(\theta f(x) + (\zeta+\theta p^t) x_j) + s_2 + w\right) \\
 &=& \Phi\left(\tr_{B/A}(\theta f(x)) + d_{r-1}p^{r-1} + a_{r-1}p^{r-1} + w\right) \\
 &=& \Phi\left(\tr_{B/A}(\theta f(x))\right) + \Phi\left(d_{r-1}p^{r-1}\right) + \Phi\left(a_{r-1}p^{r-1}\right) + \Phi\left(w\right).
\end{eqnarray*}
Thus
$$\Phi\left(v_{s,w}(y)\right) = \Phi\left(\tr_{B/A}(\theta f(y))\right) + \Phi\left(d_{r-1}p^{r-1}\right) + \Phi\left(b_{r-1}p^{r-1}\right) + \Phi\left(w\right),$$
hence $\Phi\left(v_{s,w}(x)\right)\not=\Phi\left(v_{s,w}(y)\right)$.
In particular,
$\pi_{k_{00}}\circ \Phi\left(v_{s,w}(x)\right) \not= \pi_{k_{10}}\circ \Phi\left(v_{s,w}(x)\right).$
Thus,  implication~(\ref{eq.pp01}) holds under these conditions.
\end{proof}

\begin{lemmaa} \label{lm.II0101}
 Upon the condition underlying the statement {\bf II.0.1.0.1} in Plan~\ref{tb.pl0}, implication~(\ref{eq.pp01}) holds.
\end{lemmaa}

\begin{proof}
Let $\theta\in T_B$ be as in Assertion~\ref{lm.II00} above and $(s_0, s_1,s_2) = (\theta,0,0)$. Then,
$\Phi\left(v_{s,w}(x)\right) = \Phi\left(\tr_{B/A}(\theta f(x)\right) + \Phi\left(w\right)$ and $\Phi\left(v_{s,w}(y)\right) = \Phi\left(\tr_{B/A}(\theta f(y)\right) + \Phi\left(w\right).$ 
Hence $\pi_{k_{00}}\circ \Phi\left(v_{s,w}(x)\right)\not=\pi_{k_{10}}\circ \Phi\left(v_{s,w}(y)\right)$.
\end{proof}

\begin{lemmaa} \label{lm.II01100}
 Upon the condition underlying the statement {\bf II.0.1.1.0.0} in Plan~\ref{tb.pl0}, implication~(\ref{eq.pp01}) holds.
\end{lemmaa}

\begin{proof}
Let $s_0=0$, $s_1=e_j$, $s_2=0$ and $s=(s_0,s_1,s_2)\in S$. Then as in Assertion~\ref{lm.II00} we conclude that $\pi_{k_{00}}\circ \Phi\left(v_{s,w}(x)\right)\not=\pi_{k_{10}}\circ \Phi\left(v_{s,w}(y)\right)$.
\end{proof}

\begin{lemmaa} \label{lm.II01101}
 Upon the condition underlying the statement {\bf II.0.1.1.0.1} in Plan~\ref{tb.pl0}, implication~(\ref{eq.pp01}) holds.
\end{lemmaa}

\begin{proof}
There is a pair $(s_0,s_1) = (\theta,p^t\,e_j)$ in the set $D_{\eta}$, as defined in~(\ref{eq.deta}), such that
$\Phi\left(\tr_{B/A}(\theta f(x))\right) = \Phi\left(\tr_{B/A}(\theta f(y))\right),$
since $\theta\in T_A-\{0\}$. Written in $p$-adic form
$\tr_{B/A}(p^t x_j) = \sum_{i=0}^{r-2}a_ip^i + a_{r-1}p^{r-1}$, $\tr_{B/A}(p^t y_j) = \sum_{i=0}^{r-2}a_ip^i + b_{r-1}p^{r-1}$
with $a_{r-1}\not= b_{r-1}$. An adequate selection of $s_2$ gives
\begin{eqnarray*}
\Phi\left(v_{s,w}(x)\right) &=& \Phi\left(\tr_{B/A}(\theta f(x)+ \tr_{B/A}(p^t x_j) + s_2 + w\right)  \\
 &=& \Phi\left(\tr_{B/A}(\theta f(x)+ a_{r-1}p^{r-1} + w\right)  \\
 &=& \Phi\left(\tr_{B/A}(\theta f(x)\right)+ \Phi\left(a_{r-1}p^{r-1}\right) + \Phi\left(w\right).  
\end{eqnarray*}
Similarly,
$\Phi\left(v_{s,w}(y)\right) = \Phi\left(\tr_{B/A}(\theta f(y)\right)+ \Phi\left(b_{r-1}p^{r-1}\right) + \Phi\left(w\right),$
and the right sides of the above identities are different, thus implication~(\ref{eq.pp01}) holds in this case.
\end{proof}

\begin{lemmaa} \label{lm.II0111}
 Upon the condition underlying the statement {\bf II.0.1.1.1} in Plan~\ref{tb.pl0}, the claim~(\ref{eq.pp01}) holds.
\end{lemmaa}

\begin{proof}
In this case, $\pi_{k_{00}}\circ \Phi\left(\tr_{B/A}(\eta_{(r-1)n}f(x))\right) \not= \pi_{k_{10}}\circ \Phi\left(\eta_{(r-1)n}\tr_{B/A}(f(y))\right)$
and there exists $\eta_{(r-1)n}\in T(A)-\{0\}$ such that $\eta_{(r-1)n}$ does not appear in $\eta$,  because $(r-1)(n+1)<p^m-1$. Now, we choose $s_1=0\in B^n$, $s_2=0$ and $s=(\eta_{(r-1)n},0,0)$. Then,
\begin{eqnarray*}
\pi_{k_{00}}\circ \Phi\left(v_{s,w}(x)\right) &=& \pi_{k_{00}}\circ \Phi\left(\tr_{B/A}(s_0\,f(x) + s_1\cdot x) + s_2 + w\right) \\
 &=& \pi_{k_{00}}\circ \Phi\left(\tr_{B/A}(\eta_{(r-1)n} f(x)) + w\right) \\
 &\not=& \pi_{k_{10}}\circ \Phi\left(\tr_{B/A}(\eta_{(r-1)n} f(y)) + w\right) \\
 &=& \pi_{k_{10}}\circ \Phi\left(v_{s,w}(y)\right)
\end{eqnarray*}
 and implication~(\ref{eq.pp01}) holds.
\end{proof}

\begin{lemmaa} \label{lm.II10}
 Upon the condition underlying the statement {\bf II.1.0} in Plan~\ref{tb.pl1}, implication~(\ref{eq.pp01}) holds.
\end{lemmaa}

\begin{proof}
There is a $\theta\in T_B$ such that $\tr_{B/A}(\theta(x_j-y_j))j\in p^{r-1}A-\{0\}$. By writing $\tr_{B/A}(\theta x_j)$ and $\tr_{B/A}(\theta y_j)$ in $p$-adic form as in~(\ref{eq.pftxy})
we have that, as in Assertion~\ref{lm.II00}, for any $i\leq r-2$, $a_i=b_i$ and $a_{r-1}-b_{r-1}\in p^{r-1}B-\{0\}$.
Let $(s_0,s_1,s_2) = \left(0,\theta e_j,-\sum_{i=0}^{r-2}a_ip^i\right)$. 
Then
$\Phi\left(v_{s,w}(x)\right) = \Phi\left(\tr_{B/A}(a_{r-1}p^{r-1}\right) + \Phi\left(w\right)$ and $\Phi\left(v_{s,w}(y)\right) = \Phi\left(\tr_{B/A}(b_{r-1}p^{r-1}\right) + \Phi\left(w\right).$ 
Hence $\pi_{k_{00}}\circ \Phi\left(v_{s,w}(x)\right)\not=\pi_{k_{10}}\circ \Phi\left(v_{s,w}(y)\right)$.
\end{proof}

\begin{lemmaa} \label{lm.II1100}
 Upon the condition underlying the statement {\bf II.1.1.0.0} in Plan~\ref{tb.pl1}, implication~(\ref{eq.pp01}) holds.
\end{lemmaa}

\begin{proof}
There is a $s_2$ in $\left(T(B)-(\{0\}\cup\eta)\right)\times\{0\}\times L$ such that
\begin{eqnarray*}
\Phi\left(v_{s,w}(x)\right) &=& \Phi\left(\tr_{B/A}(s_0\,f(x) + s_1\cdot x) + s_2 + w\right) \\
 &=& \Phi\left(\tr_{B/A}(\theta f(x) + (\zeta+\theta p^t) x_j) + s_2 + w\right) \\
 &=& \Phi\left(\tr_{B/A}(\theta f(x)) + \tr_{B/A}(\zeta x_j) + s_2+\tr_{B/A}(\theta p^t x_j) + w\right) \\
 &=& \Phi\left(\tr_{B/A}(\theta f(x)) + c_{r-1}p^{r-1}+a_{r-1}p^{r-1} + w\right) \\
 &=& \Phi\left(\tr_{B/A}(\theta f(x))\right) + \Phi\left(c_{r-1}p^{r-1}\right)+\Phi\left(a_{r-1}p^{r-1}\right) + \Phi\left(w\right),
\end{eqnarray*}
where we have used the $p$-adic forms displayed in Plan~\ref{tb.pl0}.

{\em Mutatis mutandis} we get,
$$\Phi\left(v_{s,w}(y)\right) = \Phi\left(\tr_{B/A}(\theta f(y))\right) + \Phi\left(c_{r-1}p^{r-1}\right)+\Phi\left(b_{r-1}p^{r-1}\right) + \Phi\left(w\right),$$
hence $\Phi\left(v_{s,w}(x)\right)\not=\Phi\left(v_{s,w}(y)\right)$.
In particular,
$\pi_{k_{00}}\circ \Phi\left(v_{s,w}(x)\right) \not= \pi_{k_{10}}\circ \Phi\left(v_{s,w}(x)\right).$ 
Thus,  implication~(\ref{eq.pp01}) holds under these conditions.
\end{proof}

\begin{lemmaa} \label{lm.II1101}
 Upon the condition underlying the statement {\bf II.1.1.0.1} in Plan~\ref{tb.pl1}, implication~(\ref{eq.pp01}) holds.
\end{lemmaa}

\begin{proof}
We may proceed as in Assertion~\ref{lm.II0101} to show that implication~(\ref{eq.pp01}) holds under these conditions.
\end{proof}

\begin{lemmaa} \label{lm.III0}
 Upon the condition underlying the statement {\bf III.0} in Plan~\ref{tb.pl},  implication~(\ref{eq.pp01}) holds.
\end{lemmaa}

\begin{proof}
For any $s=(s_0,s_1,s_2)\in S $ we have
\begin{eqnarray*}
\Phi(v_{s,w_0}(x)) - \Phi(v_{s,w_1}(x)) &=& \ \ \Phi\left(\tr_{B/A}(s_0 f(x) + s_1\cdot x) + s_2 + w_0\right) \\
 & &
 - \Phi\left(\tr_{B/A}(s_0 f(x) + s_1\cdot x) + s_2 + w_1\right) \\
 &=& \Phi\left(w_0\right)  - \Phi\left(w_1\right) \not= 0.
\end{eqnarray*}
In particular,
$ \pi_{k_{00}}\circ\Phi(v_{s,w_0}(x)) \not= \pi_{k_{00}}\circ\Phi(v_{s,w_1}(x)).$ 
Thus,  implication~(\ref{eq.pp01}) holds in this case as well.
\end{proof}

\begin{lemmaa} \label{lm.III10}
 Upon the condition underlying the statement {\bf III.1.0} in Plan~\ref{tb.pl}, the claim~(\ref{eq.pp01}) holds.
\end{lemmaa}

\begin{proof}
If, written in its $p$-adic form, $\tr_{B/A}(s_0\,f(x) + s_1\cdot x) = \sum_{i=0}^{r-1}a_ip^i$, let $s_2=-\sum_{i=0}^{r-2}a_ip^i$. As in Assertion~\ref{lm.II10}, we will have
$$\pi_{k_{00}}\circ \Phi(v_{s,w_0}(x)) \not= \pi_{k_{10}}\circ \Phi(v_{s,w_1}(x)).$$
\end{proof}

\begin{lemmaa} \label{lm.III11}
 Upon the condition underlying the statement {\bf III.1.1} in Plan~\ref{tb.pl}, implication~(\ref{eq.pp01}) holds.
\end{lemmaa}

\begin{proof}
Let $s_2=0$. We will have
$\pi_{k_{00}}\circ \Phi(v_{s,w_0}(x)) \not= \pi_{k_{10}}\circ \Phi(v_{s,w_1}(x)).$
\end{proof}

\begin{lemmaa} \label{lm.IV0}
 Upon the condition underlying the statement {\bf IV.0} in Plan~\ref{tb.pl}, implication~(\ref{eq.pp01}) holds.
\end{lemmaa}

\begin{proof}
In this case, 
$\pi_{k_{00}}\circ \Phi\left(\tr_{B/A}(\eta_{(r-1)n}f(x) )\right) = \pi_{k_{10}}\circ \Phi\left(\tr_{B/A}(\eta_{(r-1)n}f(y) )\right)$ with $\eta_{(r-1)n}\in T(A)-\{0\}$ such that $\eta_{(r-1)n}\not\in\eta,$ where $\eta$ is defined en~(\ref{eq.eta}).

If $(s_0,s_1,s_2) = (\eta_{(r-1)n},0,0)$, then $\pi_{k_{00}}\circ \Phi(v_{s,w_0}(x)) \not= \pi_{k_{10}}\circ \Phi(v_{s,w_1}(x)).$
\end{proof}

\begin{lemmaa} \label{lm.IV1}
 Upon the condition underlying the statement {\bf IV.1} in Plan~\ref{tb.pl}, implication~(\ref{eq.pp01}) holds.
\end{lemmaa}

\begin{proof}
Let $\eta\in T(A)$. Then,
$\pi_{k_{00}}\circ \Phi\left(\eta \tr_{B/A}(f(x) )\right) = \eta \pi_{k_{10}}\circ \Phi\left(\tr_{B/A}(f(x) )\right)$ and $\pi_{k_{00}}\circ \Phi\left(\eta \tr_{B/A}(f(y) )\right) = \eta \pi_{k_{10}}\circ \Phi\left(\tr_{B/A}(f(y) )\right),$ 
and if there exists $\eta\in T(A)$ such that
$$\pi_{k_{00}}\circ \Phi\left(w_0\right) + \pi_{k_{00}}\circ \Phi\left(\eta \tr_{B/A}(f(x) )\right) =\pi_{k_{10}}\circ \Phi\left(w_1\right) + \pi_{k_{10}}\circ \Phi\left(\eta \tr_{B/A}(f(y) )\right)$$ then this element $\eta$ is unique. 

Let us choose $\zeta = \left\{\zeta_k\right\}_{k=0}^{q^m -(r-1)n-2}$, as was done in relation~(\ref{eq.teta}).
Thus, either
$$\pi_{k_{00}}\circ \Phi\left(w_0\right) + \pi_{k_{00}}\circ \Phi\left(\zeta_k\tr_{B/A}(f(x) )\right) \not=\pi_{k_{10}}\circ \Phi\left(w_1\right)+ \pi_{k_{10}}\circ \Phi\left(\zeta_k\tr_{B/A}(f(y) )\right)$$
or
$$\pi_{k_{00}}\circ \Phi\left(w_0\right) + \pi_{k_{00}}\circ \Phi\left(\zeta_{k'}\tr_{B/A}(f(x) )\right) \not=\pi_{k_{10}}\circ \Phi\left(w_1\right) + \pi_{k_{10}}\circ \Phi\left(\zeta_{k'}\tr_{B/A}(f(y) )\right),$$ where $\zeta_k,\zeta_{k'}\in T(A)\cap \zeta,$ $k\neq k'.$
Let $j$ be an index witnessing the relations above and $(s_0,s_1,s_2) = (\eta_j,0,0)$. Then $\pi_{k_{00}}\circ \Phi(v_{s,w_0}(x)) \not= \pi_{k_{10}}\circ \Phi(v_{s,w_1}(x)).$
\end{proof}

\bibliographystyle{splncs}
\bibliography{lopr}

\end{document}